\newtheorem{theorem}{Theorem}[]
\newtheorem{lemma}[theorem]{Lemma}
\newtheorem{definition}[theorem]{Definition}
\theoremstyle{remark}
\numberwithin{equation}{section}
\theoremstyle{theorem}
\begin{document}

\title{Nest graphs and minimal complete symmetry groups for magic Sudoku variants}

\author{E. Arnold, R. Field, J. Lorch, S. Lucas, and L. Taalman}

\date{\today}

\maketitle

%%%%%%%%%%%%%%%%%%%%%%%%%%%%%%%%%%%%%%%%%%%%%%%%%%%%%%%%%%%%%%%%%%%%%%%
%%%%%%%%%%%%%%%%%%%%%%%%%%%%%%%%%%%%%%%%%%%%%%%%%%%%%%%%%%%%%%%%%%%%%%%
\section{Introduction}
\label{S:intro}

Felgenhauer and Jarvis famously showed in \cite{Felgenhauer}, although it was first mentioned earlier, in \cite{QSCGZ}, that there are 6,670,903,752,021,072,936,960 possible completed Sudoku boards. In a later paper, Jarvis and Russell \cite{Russell} used a Sudoku symmetry group of size $3,359,232\cdot 9!=1,218,998,108,160$ and Burnside's Lemma to show that there are 5,472,730,538 essentially different Sudoku boards.  Both of these results required extensive use of computers as magnitude of the numbers makes non-computer exploration of these problems prohibitively difficult.  The ongoing goal of this project is to find and implement methods to attack these and similar questions without the aid of a computer.

One step in this direction is to reduce the size of the symmetry group with purely algebraic, non-computer methods.   The strategy of \cite{Arnold2}, applied to the analogous symmetry group for a $4 \times 4$ Sudoku variation known as Shidoku, was to  partition the set of Shidoku boards into so-called $H_4$-nests and $S_4$-nests and then use the interplay between the physical and relabeling symmetries to find certain subgroups of $G_4$ that were both complete and minimal.  A symmetry group is {\em complete} if its action partitions the set of Shidoku boards into the two possible orbits, and {\em minimal} if no group of smaller size would do the same.

 In \cite{LorchWeld}, Lorch and Weld investigated a $9 \times 9$ variation of Sudoku called {\em modular-magic Sudoku} that has sufficiently restrictive internal structure to allow for non-computer investigation.  In this paper we will apply the techniques from \cite{Arnold2} to find a minimal complete symmetry group for the modular-magic Sudoku variation studied in \cite{LorchWeld}, as well as for another Sudoku variation that we will call {\em semi-magic Sudoku}.
 
We conclude this paper with  a simple calculator computation which leads to the non-obvious fact that the full Sudoku symmetry group is, in fact,  already minimal and complete.

%%%%%%%%%%%%%%%%%%%%%%%%%%%%%%%%%%%%%%%%%%%%%%%%%%%%%%%%%%%%%%%%%%%%%%%
%%%%%%%%%%%%%%%%%%%%%%%%%%%%%%%%%%%%%%%%%%%%%%%%%%%%%%%%%%%%%%%%%%%%%%%
\section{Modular-magic Sudoku}
\label{S:mm_props}

A Sudoku board is a $9 \time 9$ grid with nine $3 \times 3$ designated blocks.  We call the rows,  columns and diagonals of these blocks {\em mini-rows}, {\em mini-columns} and {\em mini-diagonals} respectively.  We call a rows and columns of $3 \times 3$ blocks  {\em bands} and {\em pillars} respectively. A {\em modular-magic Sudoku board} is a standard Sudoku board using the numbers 0--8 with the additional constraint that each $3 \times 3$ block is a magic square modulo 9, in the sense that the entries of every mini-row, mini-column and mini-diagonal have a sum that is divisible by $9$; see Figure~\ref{F:722}.   In this section we find a complete minimal symmetry group for modular-magic Sudoku (Theorem \ref{T:maintheorem1}). 

\begin{figure}[h]
\begin{center} % this is the board called (722)
\begin{sudoku-block}
|0|2|7|3|1|5|6|4|8|.
|1|3|5|8|6|4|2|0|7|.
|8|4|6|7|2|0|1|5|3|.
|3|5|1|6|4|8|0|7|2|.
|4|6|8|2|0|7|5|3|1|.
|2|7|0|1|5|3|4|8|6|.
|6|8|4|0|7|2|3|1|5|.
|7|0|2|5|3|1|8|6|4|.
|5|1|3|4|8|6|7|2|0|.
\end{sudoku-block}
\vspace{-1\baselineskip}
\end{center}
\caption{A modular-magic Sudoku board.}
\label{F:722}
\end{figure}

%%%%%%%%%%%%%%%%%%%%%%%%%%%%%%%%%%%%%%%%%%%%%%%%%%%%%%%%%%%%%%%%%%%%%%%
\subsection{Modular-magic Sudoku Properties}
\label{SS:mm_sym}

In this subsection, we review some facts about modular-magic Sudoku boards. For details see \cite{LorchWeld}.

Most, but not all, of the usual physical Sudoku symmetries in \cite{Felgenhauer} are valid for modular-magic Sudoku.  In particular, band swaps, pillar swaps, transpose, rotation, and  row or column swaps that  do not change the set of entries in the mini-diagonals, all preserve the modular-magic condition.  However, row or column swaps that change the center cell of a block are not modular-magic Sudoku symmetries. For example, swapping the first and second rows of the board in Figure~\ref{F:722} would result in a board that fails the modular-magic mini-diagional condition.  The order of the full group $H_{mm}$ of physical modular-magic Sudoku symmetries is $4608$.  

The set of allowable relabeling symmetries is greatly reduced for modular-magic Sudoku, as very few relabelings will preserve the modular-magic condition.  In fact, there are only $36$ elements in the group $S_{mm}$ of modular-magic relabeling symmetries on the digits 0--8, namely, the permutation
$$\rho=(12)(45)(78)$$

\noindent
and permutations of the form
$$\mu_{k,l}(n)=kn+l \mod 9$$

\noindent
for $k\in\{1,2,4,5,7,8\}$ and $l\in\{0,3,6\}$.  %\pin{need reference?}
Together with the physical symmetries this gives a full modular-magic Sudoku symmetry group $G_{mm}$ of size 165,888.  Since there are only 32,256 possible modular-magic Sudoku boards, this symmetry group is clearly larger than necessary.  Furthermore, the largest orbit of $G_{mm}$ has 27,648 elements, hence this is the smallest size possible for a complete modular-magic Sudoku symmetry group.  Our goal is to determine if this minimum can be obtained.

In \cite{LorchWeld} it is shown that the set of modular-magic boards breaks into two orbits under the action of $G_{mm}$, with representatives shown in Figure \ref{F:representatives}.

\begin{figure}[h]
\begin{center}
\begin{sudoku-block}
      |1 | 8 | 0 |  7 | 5 |6 | 4 | 2 | 3 |.
      |2 | 3 | 4 |  8 | 0 |1 | 5 | 6 | 7  |.
      |6 | 7 | 5 |  3 | 4 |2 | 0 | 1 | 8 |.
      |7 | 5 | 6 |  4 | 2 |3 | 1 | 8 | 0 |.
      |8 | 0 | 1 |  5 | 6 |7 | 2 | 3 | 4 |.
      |3 | 4 | 2 |  0 | 1 |8 | 6 | 7 | 5 |.
      |4 | 2 | 3 |  1 | 8 |0 | 7 | 5 | 6 |.
      |5 | 6 | 7 |  2 | 3 |4 | 8 | 0 | 1 |.
      |0 | 1 | 8 |  6 | 7 |5 | 3 | 4 | 2 |.
      \end{sudoku-block}
      \qquad  \qquad
    \begin{sudoku-block}
      |1 | 8 | 0 |  7 | 5 |6 | 4 | 2 | 3 |.
      |2 | 3 | 4 |  8 | 0 |1 | 5 | 6 | 7  |.
      |6 | 7 | 5 |  3 | 4 |2 | 0 | 1 | 8 |.
      |8 | 4 | 6 |  5 | 1 |3 | 2 | 7 | 0 |.
      |7 | 0 | 2 |  4 | 6 |8 | 1 | 3 | 5 |.
     | 3 | 5 | 1 |  0 | 2 |7 | 6 | 8 | 4 |.
      |5 | 1 | 3 |  2 | 7 |0 | 8 | 4 | 6 |.
     | 4 | 6 | 8 |  1 | 3 |5 | 7 | 0 | 2 |.
     | 0 | 2 | 7 |  6 | 8 |4 | 3 | 5 | 1 |.
    \end{sudoku-block}\ .
%\vspace{-1\baselineskip}
\end{center}
\caption{Representatives of the two $G_{mm}$-orbits in the set of modular-magic boards.}
\label{F:representatives}
\end{figure}

Every $3 \times 3$ block in a modular-magic Sudoku board has two mini-diagonals, one of which must be from the set $\{0,3,6\}$.  Therefore each modular-magic Sudoku board has exactly three blocks with center entry 0, three with center entry 3, and three with center entry 6.  In any block we will call the {\em off-diagonal set} the set of the two corner entries of the mini-diagonal whose entries are not from  $\{0,3,6\}$.  For example, in the first modular-magic Sudoku board from Figure~\ref{F:representatives}, the off-diagonal set of the first block is $\{1,5\}$.  The following lemma will be useful for proving our first theorem in the next section.

\begin{lemma}\label{l:twoequal}
If $M$ is a modular-magic Sudoku board then the three blocks with center $j$ have at least two off-diagonal sets in common, for $j=0,3,6$.
\end{lemma}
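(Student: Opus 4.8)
The plan is to exploit the fact that the statement is invariant under the full symmetry group $G_{mm}$ and then to reduce to the two orbit representatives of Figure~\ref{F:representatives}. First I would record the elementary structural facts I need. Writing $T_0=\{0,3,6\}$ and $T=\{1,2,4,5,7,8\}$, the center of every block lies in $T_0$, and in a magic block any two cells symmetric through the center sum to $-(\text{center})\bmod 9$; applied to the two off-diagonal corners this says that, for a block of center $j$, the off-diagonal set is an unordered pair from $T$ summing to $-j\bmod 9$. The crucial consequence for the argument is that the off-diagonal set is determined by the two \emph{values} lying on the off-diagonal and is completely insensitive to where in the block those values sit.

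Next I would verify that the conclusion is a $G_{mm}$-invariant. A relabeling $\sigma\in S_{mm}$ maps $T_0$ to $T_0$ and $T$ to $T$ (one checks this on the generators $\rho$ and $\mu_{k,l}$, using $3\mid k n+l$ when $3\mid n$), carries a center-$j$ block to a center-$\sigma(j)$ block, and sends an off-diagonal set $\{a,b\}$ to $\{\sigma(a),\sigma(b)\}$; hence it preserves the partition of the nine blocks by their center together with all coincidences among off-diagonal sets. A physical symmetry in $H_{mm}$ only relocates and reorients blocks: band and pillar swaps permute whole blocks, while transpose, rotation, and the magic-preserving row or column swaps may interchange the two mini-diagonals within a block. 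In every case the $T_0$-diagonal is sent to the $T_0$-diagonal and the off-diagonal to the off-diagonal, so each block keeps both its center and its off-diagonal set of values. Therefore, for each $j$, the multiset of off-diagonal sets attached to the three center-$j$ blocks is unchanged up to the induced relabeling of $j$, and in particular the number of coincidences among them is preserved.

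Finally, by the result of \cite{LorchWeld} quoted above, every modular-magic board lies in the $G_{mm}$-orbit of one of the two boards in Figure~\ref{F:representatives}, so it suffices to verify the statement on those two boards. Direct inspection shows that the centers form the same $3\times 3$ Latin square on $T_0$ in both cases; in the first representative the three blocks sharing any fixed center are literally identical, so all three off-diagonal sets agree, whereas in the second representative exactly two of the three coincide for each center value. In both representatives the conclusion holds, and by invariance it holds for every modular-magic board.

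I expect the only real work to be the invariance check, and specifically the verification that the symmetries which swap the two mini-diagonals — transpose, rotation, and the magic-preserving within-band row or column swaps — still carry the off-diagonal set to the off-diagonal set as a set of values while fixing the center; granting this, the reduction to two boards is immediate. As an alternative that avoids invoking the orbit classification, one could argue directly: the three center-$j$ blocks form a transversal of the Latin square of centers (one per band, one per pillar), and the three ``vertical'' center-column pairs in each pillar, as well as the three ``horizontal'' center-row pairs in each band, each partition $T$. The obstacle in that route is that the three center-$j$ blocks share no row and no column, so forcing two of their off-diagonal sets to agree requires propagating constraints through the intervening blocks along the corner rows and columns — a genuinely more delicate analysis than the representative check, which is why I would favor the invariance argument.
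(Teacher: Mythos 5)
Your proposal is correct and follows essentially the same route as the paper's own proof: verify the property on the two $G_{mm}$-orbit representatives of Figure~\ref{F:representatives} and observe that it is invariant under the action of $G_{mm}$. You simply spell out the invariance check (relabelings preserve $\{0,3,6\}$ and send off-diagonal sets to off-diagonal sets; physical symmetries relocate blocks and at most swap the two mini-diagonals) in more detail than the paper, which merely asserts it.
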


\begin{proof}
Observe that the lemma holds for the two $G_{mm}$-orbit representatives in Figure~\ref{F:representatives}, and further that the property described in the lemma is invariant under the action of $G_{mm}$. The latter assertion is quickly seen by applying generators of $G_{mm}$ to these representatives. We conclude that the lemma holds for all modular-magic sudoku boards.
\end{proof}

%%%%%%%%%%%%%%%%%%%%%%%%%%%%%%%%%%%%%%%%%%%%%%%%%%%%%%%%%%%%%%%%%%%%%%%
\subsection{$H$-nest representatives for modular-magic Sudoku}
\label{SS:mm_nests}

Following the method of \cite{Arnold2}, in this subsection we identify modular-magic Sudoku boards that can serve as representatives for equivalence classes, called $H_{mm}$-nests, defined from the modular-magic physical symmetries.  This will allow us to identify a restricted set of relabeling symmetries that, together with the physical symmetries, forms a minimal complete modular-magic Sudoku symmetry group.

We say that two modular-magic Sudoku boards are in the same $H_{mm}$-{\em nest}  when one can be obtained from the other by a sequence of physical symmetries from $H_{mm}$.  In Theorem \ref{T:Hmm_rep} we describe a unique representative for each $H_{mm}$-nest.

\begin{theorem}
Each $H_{mm}$-nest has a unique representative of the form shown in Figure~\ref{F:Hmm_nest}, where
$\alpha  <\beta $ and the two entries marked $\gamma$ are equal.
\label{T:Hmm_rep}
\end{theorem}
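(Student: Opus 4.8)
The plan is to establish existence and uniqueness of the claimed representative separately, using the product structure of $H_{mm}$ together with the observation that the nine block centers of any modular-magic board form a $3\times 3$ Latin square on $\{0,3,6\}$. The latter holds because the three centers in a band share a row of the full board and the three centers in a pillar share a column, so they are distinct along every band and pillar. It is also useful to recall that $H_{mm}$ is generated by the band and pillar swaps, which permute the rows and columns of this center array; the \emph{inner} swaps, which interchange the two outer rows of a band or the two outer columns of a pillar; and the transpose. Each inner swap fixes every block center (centers lie in the middle row of their band and the middle column of their pillar) and interchanges the two mini-diagonals of each block it touches, hence transposes the two corner entries of each affected off-diagonal set.

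Existence: Given a board $M$, I would first normalize the center array. Since all twelve $3\times 3$ Latin squares lie in a single orbit under row and column permutations, band and pillar swaps alone carry the centers of $M$ to the fixed arrangement of Figure~\ref{F:Hmm_nest}. The remaining normalization uses only inner swaps, the transpose, and the order-$3$ band/pillar cycling that stabilizes this arrangement, none of which move the centers (the canonical array being symmetric under transpose). Because an inner swap transposes the off-diagonal corners of a block, I can arrange $\alpha<\beta$; and Lemma~\ref{l:twoequal}, which says that among the three blocks with a common center at least two share an off-diagonal set, lets me cycle those blocks into the template positions so that the two cells marked $\gamma$ carry the common value and are therefore equal. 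The one genuinely delicate point here is that inner swaps act on an entire band or pillar at once, so ordering the corners of one block also flips the mini-diagonals of the other two blocks in its band; one must check that the template is shaped so that a single consistent choice of swaps realizes all of these conditions at once. That each magic block is determined by its center and its four corners---the two non-off-diagonal corners being the remaining multiples of $3$, and the two off-diagonal corners summing to twice the center---keeps this a finite verification.

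Uniqueness: Suppose two boards $M$ and $M'$ in the template form satisfy $M'=gM$ for some $g\in H_{mm}$. Then $g$ carries the canonical center array to itself, so $g$ lies in its stabilizer, a group of order $4608/12=384$ generated by the inner swaps, the transpose, and the order-$3$ band/pillar cycling. I would then track the action of an arbitrary such $g$ on the designated cells and off-diagonal sets: the condition $\alpha<\beta$ rules out the outer swap of the band carrying those two corners, and, together with the requirement that both boards have equal $\gamma$-cells, it constrains the residual cycling and the transpose as well. Running this bookkeeping over the $384$ candidates should leave only the identity, forcing $M=M'$.

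The main obstacle is the rigidity in the uniqueness step: one must confirm that the normalizing conditions are simultaneously incompatible with every nontrivial element of the $384$-element stabilizer. The coupling of the inner swaps across whole bands and pillars is what makes this both the crux of existence and the engine of uniqueness, since a nontrivial symmetry cannot fix the ordered off-diagonal set of one block without disturbing the off-diagonal sets of its bandmates in a way that Lemma~\ref{l:twoequal} and the $\gamma$-equality forbid. The reduction of each block to its center and corners is the tool that makes this otherwise large case analysis manageable.
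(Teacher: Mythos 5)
Your overall strategy matches the paper's: normalize the centers and the $\{0,3,6\}$ mini-diagonals by physical symmetries, invoke Lemma~\ref{l:twoequal}, arrange $\alpha<\beta$, and finish uniqueness with a finite check of the stabilizer. But there is a genuine gap at exactly the step you flag as delicate: making the two cells marked $\gamma$ equal. You propose to achieve this by symmetries (cycling the three center-$3$ blocks and applying inner swaps), but once the frame is fixed --- the canonical center array together with $0,3,6$ on the main diagonal of every block in the prescribed cyclic order --- the subgroup of $H_{mm}$ preserving that frame has order only $6$, generated by the transpose and the order-$3$ band/pillar cycling: any outer row or column swap moves some block's $\{0,3,6\}$ off its main diagonal (or, if you swap everything, reverses every main diagonal), so no nontrivial combination of inner swaps survives the frame conditions. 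That order-$6$ group can move the pair of blocks sharing an off-diagonal \emph{set} into positions $(2,3)$ and $(3,2)$, but it cannot force their top-right \emph{corners} to agree: if both blocks had off-diagonal set $\{1,5\}$ but with top-right corners $1$ and $5$ respectively, no element of the stabilizer repairs this while keeping $\alpha<\beta$ in the upper-left block. So ``a single consistent choice of swaps'' does not exist in general, and your existence argument stalls here.

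What actually closes this step in the paper is not a symmetry but a structural fact obtained by completing partial boards: with $\alpha_1=1$, the top-right off-diagonal corner of each center-$3$ block can only be $1$, $2$, or $8$, forcing the bottom-left corner into the disjoint set $\{5,4,7\}$; hence equality of the off-diagonal sets $\{\alpha_2,\beta_2\}=\{\alpha_3,\beta_3\}$ (which you do obtain from Lemma~\ref{l:twoequal} plus cycling) automatically yields $\alpha_2=\alpha_3$ with no further swap needed, and similarly for $\alpha_1=2,7$. Your proposal never determines which completions of the frame are actually realizable, so the bad configuration above is never ruled out. Two smaller points: you attribute the normalization $\alpha<\beta$ to an inner swap, but an inner swap disturbs the main diagonals of its whole band --- the transpose is the move that works; and in the uniqueness step it is not true that only the identity carries a template board to a template board (the $3$-cycle fixes $[1,1]$, for instance), so the bookkeeping must conclude, as the paper does, that any nonidentity possibility forces $\alpha=\gamma=\alpha'=\gamma'$ and hence equal boards, rather than that the stabilizer acts freely.
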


\begin{figure}[h]
\begin{center}
\begin{sudoku-block}
|0| |{\mbox{$\alpha$}}|3| | |6| | |.
| |3| | |6| |  |0| |.
|{\mbox{$\beta$}}| |6 | | |0| | |3|.
|3| | |6| | |0| |{\mbox{$\gamma$}} |.
| |6| |  |0| | |3||.
| | |0| | |3| | |6|.
|6| | |0| |{\mbox{$\gamma$}} |3| | |.
| |0| | |3| | |6| |.
| | |3| | |6| | |0|.
\end{sudoku-block}
\vspace{-1\baselineskip}
\end{center}
\caption{An $H_{mm}$-nest representative.}
\label{F:Hmm_nest}
\end{figure}

\begin{proof}
Band, pillar, row, and column swaps from $H_{mm}$ can transform the upper-left block of any modular-magic board into one with $\{0,3,6\}$ on the decreasing mini-diagonal as shown in Figure~\ref{F:Hmm_nest}, and with further band, pillar, row, and column swaps from $H_{mm}$ we can obtain a board $M$ of the form shown in Figure \ref{F:M}.

\begin{figure}[h]
\begin{center}
\begin{sudoku-block}
|0| |{\mbox{$\alpha _1$}}|3| | |6| | |.
| |3| | |6| |  |0| |.
|{\mbox{$\beta _1$}}| |6 | | |0| | |3|.
|3| | |6| | |0| |{\mbox{$\alpha _2$}} |.
| |6| |  |0| | |3||.
| | |0| | |3|{\mbox{$\beta _2$}} | |6|.
|6| | |0| |{\mbox{$\alpha _3$}} |3| | |.
| |0| | |3| | |6| |.
| | |3|{\mbox{$\beta _3 $}} | |6| | |0|.
\end{sudoku-block}
\end{center}
\caption{Modular-magic sudoku board $M$.}
\label{F:M}
\end{figure}

In light of Lemma \ref{l:twoequal}, we can apply band/pillar permutations to ensure that $\{\alpha _2,\beta _2\}=\{\alpha _3,\beta _3\}$.  By applying the transpose symmetry in $H_{mm}$ (if necessary) we may assume that $\alpha _1<\beta _1$.  Since $\alpha _1 + 3 + \beta _1$ must be divisible by 9, the condition $\alpha _1 <\beta _1 $ means that we must have $\alpha _1 =1$, $2$, or $7$.  By completing partial boards it can be shown that if $\alpha_1=1$, then the only possible values for $\alpha_2$ and $\alpha _3$ are $1$, $2$, and $8$. This, together with the fact that $\{\alpha _2,\beta _2\}=\{\alpha _3,\beta _3\}$, implies that $\alpha _2=\alpha _3$ when $\alpha _1= 1$. A similar argument can be applied for the other possible values of $\alpha _1$, and therefore $M$ has the form of Figure \ref{F:Hmm_nest}.

We denote boards as depicted in Figure \ref{F:Hmm_nest} by $[\alpha ,\gamma ]$. Note that this data completely determines every entry of the board. Suppose that $[\alpha ,\gamma ]$ and $[\alpha ', \gamma ']$ are $H_{mm}$-equivalent. Then either $\alpha =\alpha '$ and $\gamma =\gamma '$, in which case the boards are identical, or $\alpha =\gamma '$, $\gamma =\gamma '$, and $\gamma =\alpha'$, in which case $\alpha=\gamma =\alpha '=\gamma '$ and again the boards are identical. We conclude that the representatives $M$ are unique.
\end{proof}

Following Theorem \ref{T:Hmm_rep} we find that there are only nine possible $H_{mm}$-representatives, corresponding to the following pairs $[\alpha ,\gamma ]$:

\begin{center}
\begin{tabular}{lll}
$[1,1]$ & $[2,2]$ & $[7,7]$ \\
$[1,2]$ & $[2,1]$ & $[7,2]$ \\
$[1,8]$ & $[2,7]$ & $[7,5]$
\end{tabular}
\end{center}

\noindent
For example, the modular-magic Sudoku board shown in Figure~\ref{F:722} is the representative board $[7,2]$.

As mentioned in the proof of Lemma \ref{l:twoequal}, the set of modular-magic boards is a union of two $G_{mm}$-orbits. Observe that the three $H_{mm}$-nests represented by $[1,1]$, $[2,2]$, and $[7,7]$ lie in the
$G_{mm}$-orbit containing the left board of Figure \ref{F:representatives}, which has size $4608$ according to \cite{LorchWeld}. Meanwhile, the remaining six $H_{mm}$-nests lie in the same $G_{mm}$-orbit as the right-hand board of Figure \ref{F:representatives}, which has size $27648$ by \cite{LorchWeld}. This tells us that the three $H_{mm}$-nests represented by $[1,1]$, $[2,2]$, and $[7,7]$ have size $4608/3 =1536$ each while the remaining six $H_{mm}$-nests are each of size $27,648/6=4608$.

%%%%%%%%%%%%%%%%%%%%%%%%%%%%%%%%%%%%%%%%%%%%%%%%%%%%%%%%%%%%%%%%%%%%%%%
\subsection{A minimal complete modular-magic Sudoku symmetry group}
\label{SS:mm_minimal}

The modular-magic Sudoku relabeling symmetries group $S_{mm}$ described in Section \ref{SS:mm_sym} can be expressed as
$$S_{mm} = \langle \rho, \mu_{4,0}, \mu_{5,3}, \mu_{5,6} \rangle,$$

\noindent
since the four permutations $\rho = (12)(45)(78)$, $\mu_{4,0}(n)=(147)(285)$,
$\mu_{5,3}(n)=(03)(187245)$, and $\mu_{5,6}=(06)(127548)$ generate the entire group.

Now define {\em $H_{mm}$-nest graph for a group $S$} to be the graph that consists of nine vertices, one for each modular-magic $H_{mm}$-representative board, where two vertices $A$ and $B$ are connected by a directed edge $\sigma$ if the permutation $\sigma \in S$ takes the modular-magic representative board $A$ to a board that is $H_{mm}$-equivalent to representative board $B$.  It is sufficient to consider edges defined by a set of generators for $S$.  Since the set of modular-magic Sudoku boards has two orbits under the action of $G_{mm}=S_{mm} \times H_{mm}$ (see proof of Lemma \ref{l:twoequal}), the $H_{mm}$-nest graph for $S_{mm}$ corresponding to the four permutations $\rho$, $\mu_{4,0}$, $\mu_{5,3}$ and $\mu_{5,6}$ must have two components.

If $S'$ is a subgroup of $S_{mm}$, then $S' \times H_{mm}$ is a {\em complete modular-magic Sudoku symmetry group} if the $H_{mm}$-nest graph for $S'$ corresponding to a set of generators for $S'$ has two components.  In fact, if we take
$$S'=\langle \rho, \mu_{4,0} \rangle,$$

\noindent
then this is precisely what happens, as shown in Figure~\ref{F:mm_graph}.  In this figure the single arrow represents the permutation $\rho$ and the double arrow represents $\mu_{4,0}$.

\begin{figure}[h]
$$\xymatrix{[1,2]\ar@{<->}[r]\ar@2{->}[d]&[2,1]\ar@/^2pc/@{=>}[dd]&&[1,1]\ar@{<->}[d]\ar@/^1pc/@{=>}[d]\\
[2,7]\ar@{<->}[r]\ar@2{->}[d]&[1,8]\ar@2{->}[u]&&[2,2]\ar@{=>}[d]\\
[7,5]\ar@{<->}[r]\ar@/^2pc/@{=>}[uu]&[7,2]\ar@2{->}[u]&&[7,7]\ar@(dr,dl)@{<->}[]
}$$
\caption{Action of $\langle \rho, \mu_{4,0} \rangle$ on the set of $H_{mm}$-nests.}
\label{F:mm_graph}
\end{figure}
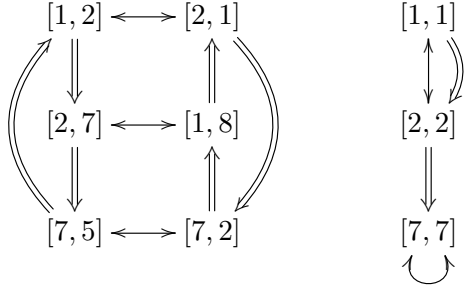

Furthermore, since $\langle \rho, \mu_{4,0} \rangle$ has order 27,648, which is equal to the largest orbit of $G_{mm}$, we know that this group is of {\em minimal} size.  This proves our first main result of this paper:

\begin{theorem} \label{T:maintheorem1}
$H_{mm} \times\langle \rho,\mu_{4,0}\rangle$ is a minimal complete modular-magic sudoku symmetry group.
\end{theorem}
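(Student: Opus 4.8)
The plan is to split the argument into a \emph{completeness} half and a \emph{minimality} half, with essentially all of the work going into completeness and the minimality then falling out of a single order computation.

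For completeness I would first reduce the orbit problem to a graph problem on the nine $H_{mm}$-nests. Because $G_{mm}=S_{mm}\times H_{mm}$ is a direct product, relabeling symmetries commute with the physical symmetries in $H_{mm}$; consequently, for any $\sigma\in S_{mm}$ and any board $M$ one has $\sigma\,(H_{mm}\cdot M)=H_{mm}\cdot(\sigma M)$, so $\sigma$ carries an entire $H_{mm}$-nest onto another entire $H_{mm}$-nest. Thus the action of $S'\times H_{mm}$ on boards descends to an action of $S'$ on the set of nine nest representatives, and two boards lie in the same $(S'\times H_{mm})$-orbit precisely when their nests are joined by a sequence of generator moves, i.e.\ when they lie in the same connected component of the $H_{mm}$-nest graph for $S'$. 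Since $S'=\langle\rho,\mu_{4,0}\rangle$ is a subgroup of $S_{mm}$, each $(S'\times H_{mm})$-orbit is contained in a $G_{mm}$-orbit, so the nest graph for $S'$ refines the two-block partition $\{[1,1],[2,2],[7,7]\}$ versus $\{[1,2],[2,1],[1,8],[2,7],[7,2],[7,5]\}$ coming from the two $G_{mm}$-orbits. Hence $H_{mm}\times S'$ is complete if and only if its nest graph has exactly two connected components.

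The substantive step, and the one I expect to be the main obstacle, is to compute the two directed edge-sets explicitly and thereby verify that exactly two components occur. For each of the nine representatives $[\alpha,\gamma]$ I would apply $\rho$ and $\mu_{4,0}$ entrywise and then re-normalize the resulting board back to the canonical form of Figure~\ref{F:Hmm_nest} using physical symmetries, reading off the new pair $[\alpha',\gamma']$. The delicate point is the bookkeeping of this renormalization: a relabeling generally disturbs the requirements that $\{0,3,6\}$ occupy the decreasing mini-diagonal of the first block and that $\alpha<\beta$, so one must track how the compensating band/pillar swaps and the transpose act on the data $[\alpha,\gamma]$ (for instance, the transpose used to restore $\alpha<\beta$ simultaneously permutes the $\gamma$-entries). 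Carrying this out for both generators produces precisely the arrows of Figure~\ref{F:mm_graph}, which visibly has two components: the six nests in the left two columns and the three nests $[1,1],[2,2],[7,7]$ on the right. By the reduction above, this establishes completeness.

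Minimality is then immediate. I would note that, by orbit--stabilizer, the order of any group bounds the sizes of its orbits from above, so any complete modular-magic symmetry group --- having the size-$27648$ set as one of its two orbits --- must have order at least $27648$. Since $\langle\rho,\mu_{4,0}\rangle\cong S_3$ has order $6$ and $H_{mm}$ has order $4608$, the group $H_{mm}\times\langle\rho,\mu_{4,0}\rangle$ has order $4608\cdot 6=27648$, meeting this lower bound exactly. Therefore it is both complete and minimal, which is the assertion of the theorem.
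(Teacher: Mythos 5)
Your proposal is correct and follows essentially the same route as the paper: completeness is established by checking that the $H_{mm}$-nest graph for $\langle\rho,\mu_{4,0}\rangle$ has exactly two components (the paper's Figure~\ref{F:mm_graph}), and minimality follows because the group's order, $4608\cdot 6=27648$, equals the size of the largest $G_{mm}$-orbit, which is a lower bound for any complete symmetry group. Your write-up merely makes explicit two points the paper leaves implicit, namely why relabelings permute whole nests and the orbit--stabilizer justification of the lower bound.
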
	

%%%%%%%%%%%%%%%%%%%%%%%%%%%%%%%%%%%%%%%%%%%%%%%%%%%%%%%%%%%%%%%%%%%%%%%
%%%%%%%%%%%%%%%%%%%%%%%%%%%%%%%%%%%%%%%%%%%%%%%%%%%%%%%%%%%%%%%%%%%%%%%
\section{Semi-magic Sudoku}
\label{S:sm}

A {\em semi-magic square} is a $3 \times 3$ array containing all of the symbols $\{0, 1, \dots, 8\}$ with each row and column adding to 12, and no condition on the diagonals.  A {\em semi-magic Sudoku board} is a Sudoku board whose $3 \times 3$ subsquares are semi-magic, see Figure \ref{MD1}.

\begin{figure}[h]
\begin{center}
\begin{sudoku-block}
|0|4|8|7|2|3|5|6|1|.
|5|6|1|0|4|8|7|2|3|.
|7|2|3|5|6|1|0|4|8|.
|8|0|4|1|5|6|3|7|2|.
|1|5|6|3|7|2|8|0|4|.
|3|7|2|8|0|4|1|5|6|.
|4|8|0|2|3|7|6|1|5|.
|6|1|5|4|8|0|2|3|7|.
|2|3|7|6|1|5|4|8|0|.
\end{sudoku-block}
\vspace{-1\baselineskip}
\end{center}
\caption{A semi-magic Sudoku board. }
\label{MD1}
\end{figure}

%%%%%%%%%%%%%%%%%%%%%%%%%%%%%%%%%%%%%%%%%%%%%%%%%%%%%%%%%%%%%%%%%%%%%%%
\subsection{Properties of semi-magic Sudoku}
\label{SS:properties}

\begin{lemma} \label{thelemma} Mini-rows in a semi-magic Sudoku subsquare must be permutations of $\{0, 4, 8\}, \{5, 6, 1\},$ and $\{7, 2, 3\}$  and the  mini-columns permutations of $\{0, 5, 7\},
\{4, 6, 2\}$, and $\{8, 1, 3\}$ or vice versa. (See Figure \ref{MD1} as an example.)
\end{lemma}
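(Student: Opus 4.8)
The plan is to reduce the statement to a purely combinatorial fact about partitions of $\{0,1,\dots,8\}$ and then exploit the distinctness of the nine entries. Since a semi-magic subsquare contains each of $0,\dots,8$ exactly once and each mini-row sums to $12$, the three mini-row value-sets form a partition of $\{0,\dots,8\}$ into three triples each summing to $12$, and the same holds for the three mini-columns. So the first task is to enumerate all such partitions.

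First I would list every $3$-subset of $\{0,\dots,8\}$ whose elements sum to $12$; sorting by least element gives exactly eight, namely $\{0,4,8\}$, $\{0,5,7\}$, $\{1,3,8\}$, $\{1,4,7\}$, $\{1,5,6\}$, $\{2,3,7\}$, $\{2,4,6\}$, $\{3,4,5\}$. To build a partition from these, I would track the forced placement of small symbols: the symbol $0$ lies in either $\{0,4,8\}$ or $\{0,5,7\}$. If $0\in\{0,4,8\}$, the only listed triple containing $1$ and disjoint from $\{0,4,8\}$ is $\{1,5,6\}$, which forces the third triple to be $\{2,3,7\}$; if $0\in\{0,5,7\}$, the only admissible triple containing $1$ is $\{1,3,8\}$, forcing $\{2,4,6\}$. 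Hence there are exactly two partitions, $P=\{\{0,4,8\},\{1,5,6\},\{2,3,7\}\}$ and $Q=\{\{0,5,7\},\{2,4,6\},\{1,3,8\}\}$, which are precisely the two triple-systems named in the statement. Thus the mini-row partition and the mini-column partition are each equal to $P$ or to $Q$.

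The remaining point is that these two partitions cannot coincide, and here I would use that each entry appears exactly once. If $R$ is the value-set of a mini-row and $C$ the value-set of a mini-column, then any value lying in both $R$ and $C$ occurs in that row and in that column, hence sits in their unique common cell, so $|R\cap C|=1$. Consequently every block of the mini-row partition meets every block of the mini-column partition in exactly one element; if the two partitions were equal, some block would meet itself in three elements, a contradiction. Therefore the partitions are distinct, and since $P$ and $Q$ are the only options, the mini-rows realize one and the mini-columns the other, which is exactly the ``or vice versa'' alternative asserted. One checks directly that $P$ and $Q$ are mutually orthogonal in this single-element-overlap sense, so no further obstruction arises and such squares genuinely exist, as in Figure~\ref{MD1}.

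The only computational step is the enumeration in the second paragraph, and the main thing to get right there is making the ``placement of $0$ and $1$ forces everything'' reduction airtight rather than checking all cases by brute force. The orthogonality argument is short, and it is where the essential idea, namely that distinctness of entries forces single-element overlaps between rows and columns, does the real work.
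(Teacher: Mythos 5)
Your proof is correct and follows essentially the same route as the paper's: enumerate the two partitions of $\{0,\dots,8\}$ into triples summing to $12$, then argue that the mini-rows must realize one partition and the mini-columns the other. You simply supply the details the paper leaves implicit (the explicit list of eight admissible triples and the observation that a row value-set and a column value-set meet in exactly one element), which makes the ``simple combinatorial argument'' airtight.
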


\begin{proof} A simple combinatorial argument shows that these are the only two ways to partition the set $\{0, \dots, 8\}$ into three sets of three that sum to twelve.  Once the first mini-row has been set, all other mini-rows must be from one subset and all mini-columns from the other.
\end{proof}

From the lemma, we can conclude that there are $3! \cdot 3! \cdot 2 = 72$ distinct $3 \times 3$ semi-magic Sudoku subsquares.  We will use the term {\em gnomon} to denote the union of the first pillar and first band of a semi-magic Sudoku board. Again, using the lemma, we see there are $72 \cdot 3! \cdot 2 \cdot 3! =72^2$ possible semi-magic Sudoku bands and $72^3$ semi-magic Sudoku gnomons.  We call the gnomon in Figure \ref{gnomon} the {\em standard gnomon}.

\begin{figure}[h]
\begin{center}
\begin{sudoku-block}
|0|4|8|7|2|3|5|6|1|.
|5|6|1|0|4|8|7|2|3|.
|7|2|3|5|6|1|0|4|8|.
|8|0|4||||||.
|1|5|6||||||.
|3|7|2||||b|||.
|4|8|0|||a||||.
|6|1|5|||||||.
|2|3|7|||||||.
\end{sudoku-block}
\vspace{-1\baselineskip}
\end{center}
\caption{The standard semi-magic Sudoku gnomon.}
\label{gnomon}
\end{figure}

The {\em gnomon-preserving} physical symmetries are generated by transpose, any row swap within a band or column swap within a pillar, and swapping pillars two and three or swapping bands two and three.  We denote the group generated by these symmetries $H_{\Gamma}$. Following the method used in Section~6 of \cite{Arnold2}, we partition the set of modular-magic Sudoku boards into {\em $H_{\Gamma}$-nests}, where two semi-magic Sudoku boards are in the same nest if and only if one can be obtained from the other by a sequence of physical symmetries from $H_{\Gamma}$.  The following theorem describes a unique representative for each $H_{\Gamma}$-nest:

\begin{theorem} Using {\em gnomon-preserving} physical symmetries from $H_{\Gamma}$, any semi-magic Sudoku board can be transformed so that its gnomon is the standard gnomon. There are sixteen $H_{\Gamma}$-nests, uniquely represented by a board of the form $[a,b]$ as shown in Figure \ref{gnomon}.
\end{theorem}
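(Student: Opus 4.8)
The statement has two parts: that $H_{\Gamma}$ acts transitively on the set of semi-magic gnomons (so that every board can be carried to standard-gnomon form), and that there are exactly sixteen nests, each with a unique standard-gnomon representative of the form $[a,b]$. The plan is to prove both by comparing the order of $H_{\Gamma}$ with the number $72^3$ of semi-magic gnomons. The key tool is that the subgroup $K\le H_{\Gamma}$ generated by the row swaps of band one, the column swaps of pillar one, and the transpose has order $72$ and acts \emph{freely} on the upper-left block: since a semi-magic square has nine distinct entries, only the identity of $K$ can fix it. As there are exactly $72$ semi-magic squares (Lemma~\ref{thelemma}), $K$ acts simply transitively on them, so the upper-left block can be carried to the standard block by a unique element of $K$.

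First I would record that $|H_{\Gamma}|=72^3$. The row operations (arbitrary permutations within each band together with the swap of bands two and three) form a group $S_3\times(S_3\wr S_2)$ of order $6\cdot 72=432$, and the column operations form an isomorphic group; these two commute, and the transpose interchanges them, so $|H_{\Gamma}|=432^2\cdot 2=72^3$, exactly the number of semi-magic gnomons. Next I would show the stabilizer of the standard gnomon is trivial. If $\sigma\in H_{\Gamma}$ fixes the standard gnomon then it fixes the upper-left block, so its induced action there is the identity of $K$; in particular $\sigma$ uses no transpose and acts trivially on band one and pillar one. The surviving factor of $\sigma$ is then a product of band two--three and pillar two--three operations. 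Fixing the two band-one blocks lying in pillars two and three forces the pillar part to be trivial (no pillar swap, since those two blocks are not column permutations of one another, and no nontrivial column permutation, by freeness), and symmetrically the two pillar-one blocks in bands two and three kill the band part. Hence $\sigma=\mathrm{id}$. By orbit--stabilizer the orbit of the standard gnomon has size $72^3$, which is all of the gnomons; this proves the first assertion.

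Triviality of the stabilizer also yields the bijection needed for the counting part: within any nest exactly one board has the standard gnomon (two such boards would differ by an element stabilizing the standard gnomon, hence by the identity), so the nests are in bijection with the completions of the standard gnomon to a full semi-magic board. To count these I would analyze the lower-right $6\times 6$ region, blocks $(2,2),(2,3),(3,2),(3,3)$. Using Lemma~\ref{thelemma} to pin the row-set and column-set of each unknown block against the fixed gnomon, a short propagation shows that the entire region is forced once the single cells marked $a$ (in block $(3,2)$) and $b$ (in block $(2,3)$) are chosen. A local count gives the feasible values directly: the entry $a$ must avoid the triple $\{0,4,8\}$ already present in its row and the triple $\{1,3,8\}$ present in its column, leaving $a\in\{2,5,6,7\}$, while $b$ avoids $\{2,3,7\}$ and $\{0,5,7\}$, leaving $b\in\{1,4,6,8\}$. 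Because blocks $(3,2)$ and $(2,3)$ share neither a row nor a column, these two choices are independent, and verifying that each of the $4\cdot 4=16$ resulting pairs extends (necessarily uniquely) to a valid board completes the enumeration: there are sixteen nests, uniquely labeled by $[a,b]$.

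The routine but delicate part is the propagation in the last paragraph --- checking that fixing $a$ and $b$ genuinely forces every remaining entry, and that all sixteen pairs are simultaneously realizable rather than merely locally feasible. This is the semi-magic analogue of the \emph{completing partial boards} step used in the proof of Theorem~\ref{T:Hmm_rep}, and I expect it, rather than the group-order and freeness arguments, to be the main obstacle.
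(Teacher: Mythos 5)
Your proposal is correct and, in outline, follows the same two-step plan as the paper's proof: first move an arbitrary board to standard-gnomon form, then count completions of the standard gnomon via the two free cells $a$ and $b$. The difference is in how much actually gets proved. The paper disposes of the first step with ``we can easily take a semi-magic Sudoku board and set the standard gnomon'' and never addresses why two distinct standard-gnomon boards cannot lie in the same $H_{\Gamma}$-nest; you supply both missing pieces with a single computation, namely $|H_{\Gamma}|=2\cdot 432^2=72^3$ together with triviality of the stabilizer of the standard gnomon (via the free action of the order-$72$ group $K$ on the $72$ semi-magic squares, plus the observation that blocks $(1,2)$ and $(1,3)$, resp.\ $(2,1)$ and $(3,1)$, are not column, resp.\ row, permutations of one another). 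Orbit--stabilizer then delivers transitivity on the $72^3$ gnomons and uniqueness of the representative in each nest simultaneously; this buys genuine rigor at the cost of the order computation, while the paper's version buys brevity. Both arguments lean on the same unverified claim that the standard gnomon together with the cells $a$ and $b$ forces the rest of the board, so the ``routine but delicate'' propagation you flag is a gap shared with, not created relative to, the published proof. One small point in your favor: your feasible sets $a\in\{2,5,6,7\}$ and $b\in\{1,4,6,8\}$ agree with the sixteen labels $[a,b]$ that actually appear in the paper's orbit diagram and with the board $[7,1]$ of Figure~\ref{MD1}, whereas the sets $\{4,5,6,0\}$ and $\{1,2,3,6\}$ printed in the paper's proof appear to be typographical errors.
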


\begin{proof} We can easily take a semi-magic Sudoku board and set the standard gnomon using transpose, row, column,  and 2-3-band and pillar swaps.  Once the standard gnomon has been set, the board is completely determined by the entries in the (7,6) and (6,7) position in the $9 \times 9$ grid. The possible entries in the (7,6) position are $\{4, 5, 6, 0\}$ and $\{1,2,3,6\}$ in the (6,7) position. Therefore there are 16 distinct semi-magic Sudoku boards with the standard gnomon.  \end{proof}

We call these 16 representatives the {\em standard semi-magic Sudoku boards}, and denote them by $[a,b]$, as in Figure \ref{gnomon}.  For example, the semi-magic Sudoku board in Figure \ref{MD1} is denoted [7,1]. We have now determined that there are $72^3 \cdot 16 =$ 5,971,968 distinct semi-magic Sudoku boards.

All of the physical Sudoku symmetries from \cite{Arnold2} are valid semi-magic Sudoku symmetries, denoted $H_9$.  On the other hand, the group $S_{sm}$ of semi-magic Sudoku relabeling symmetries is far smaller than the group of sudoku relabelings.
One can show that $S_{sm}$ is isomorphic to the group of physical symmetries preserving semi-magic squares; meanwhile Lemma \ref{thelemma} indicates that this group of physical symmetries is isomorphic to $(S_3\times S_3) \rtimes \mathbb{Z}_2$, generated by row permutations, column permutation, and transpose. Therefore $S_{sm}\cong (S_3 \times S_3) \rtimes \mathbb{Z}_2$, and so the full group of semi-magic Sudoku symmetries, $G_{sm} = H_9 \times S_{sm}$, has order 3,359,232$ \cdot $72 = 241,864,704.  As with modular magic Sudoku and Shidoku, the size of this group is large compared with the set of semi-magic Sudoku boards that it is acting upon.  In the next section, we use the techniques of \cite{Arnold2} to find a minimal, complete group of symmetries for semi-magic Sudoku.

%%%%%%%%%%%%%%%%%%%%%%%%%%%%%%%%%%%%%%%%%%%%%%%%%%%%%%%%%%%%%%%%%%%%%%%
\subsection{Orbits and $H_{\Gamma}$-Nests for semi-magic Sudoku}
\label{S:sm_nests}

As described in the previous section, the sixteen boards denoted $[a,b]$ are representatives of the $H_{\Gamma}$-nests.   Clearly each nest sits inside a $G_{sm}$ orbit.  We need to determine which nests are in the same orbits.   Applying additional non-gnomon-preserving physical symmetries to these boards, we find the four orbits shown below.  In the diagram,
the single arrow is the symmetry, $u$,  swapping band 1 and 2 and the double arrow, $v$, is the symmetry swapping pillar 1 and 2.   Adding just a single relabeling, $\mu = (12)(45)(78)$, connects the middle two connected components in the diagram with the dashed line giving us three distinct semi-magic Sudoku components, denoted, top to bottom, $\mathcal{O}_1, \mathcal{O}_2$ and $\mathcal{O}_3$.

\begin{figure}[h] \label{orbits}
$$\hspace{-.5in}\xymatrix{&&&&[7,8]\ar@(dl,dr)@{=>}[]\ar@(ul,ur)@{->}[]&&&&\\
\\
&&[5,8]\ar@{=>}@(ul,ur)[]\ar[dl]\ar@{-->}[rrrr]&&&&[7,4]\ar@(ul,ur)[]\ar@{=>}[dl]&&\\
&[7,1]\ar@{=>}@(dl,dr)[]\ar[rr]&&[5,1]\ar@{=>}@(dl,dr)[]\ar[ul]&&[2,8]\ar@(dl,dr)[]\ar@{=>}[rr]&&[2,4]\ar@(dl,dr)[]\ar@{=>}[ul]\\
\\
&[5,4]\ar@{=>}@/^1pc/[rrr]\ar[dl]&&&[6,8]\ar@{=>}@/^1pc/[rrr]\ar[dl]&&&[2,6]\ar@{=>}@/_3pc/[llllll]\ar[dl]&\\
[7,6]\ar@{=>}@/_1pc/[rrr]\ar[rr]&&[6,1]\ar@{=>}@/_3pc/[rrrrrr]\ar[ul]&[2,1]\ar@{=>}@/_1pc/[rrr]\ar[rr]&&[5,6]\ar@{=>}@/_1pc/[lll]\ar[ul]&[6,4]\ar@{=>}@/^3pc/[llllll]\ar[rr]&&[6,6]\ar@{=>}@/_1pc/[lll]\ar[ul]
}$$
\vspace{.1in}
\caption{Action of $\langle u, v, \mu \rangle$ on $H_{\Gamma}$-nests}
\end{figure}
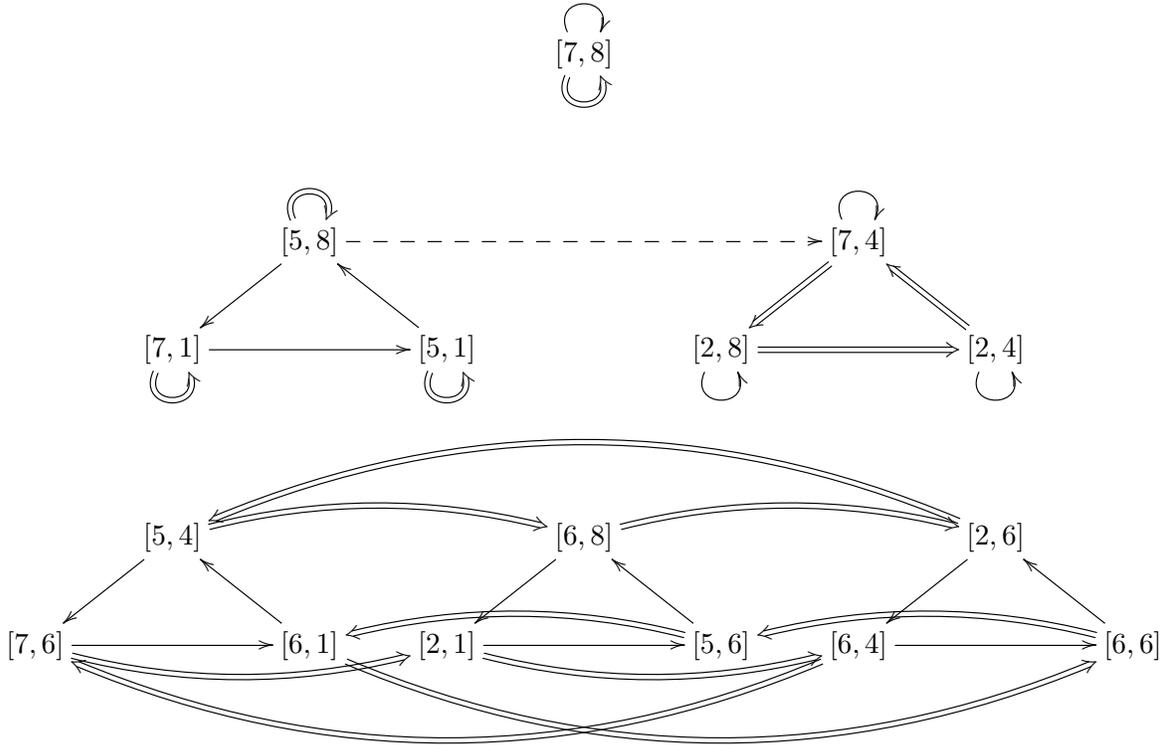

The diagram shows that there are at most three $G_{sm}$ orbits.  A brute force computation can confirm that there are no fewer.  However, a more elegant argument uses the notion of {\em Keedwell} boards and linearity from \cite{LorchJAuMS}.  In general, a Sudoku board is Keedwell if any subsquare can be obtained by permuting the rows and/or columns of the upper-left subsquare.  More precisely, we have the following definition from \cite{Keedwell}:

\begin{definition}  Let $\alpha$ be the operator on subsquares that acts by cycling down one mini-row and let $\beta$ the operator on subsquares that acts by cycling right one mini-column.  A Sudoku board $B$ with upper-left subsquare $K$ is {\bf Keedwell} if there exists matrices $\{c_{ij}\}$ and $\{d_{ij}\}$ such that
\begin{itemize}
\item $c_{00}=1$ and $d_{00}=1$, and
\item the $(i,j)^{\rm th}$ subsquare of $B$ is $\alpha^{c_{ij}}\beta^{d_{ij}}K$.
\end{itemize}
\end{definition}

For example, the Sudoku board $[7,1]$ shown in Figure~\ref{MD1} and in set $\mathcal{O}_2$ of Figure~\ref{orbits} is Keedwell; its upper-left block is
$K=\left[\begin{smallmatrix}
0 & 4 & 8 \\
5 & 6 & 1 \\
7 & 2 & 3
\end{smallmatrix}\right],$
and with respect to this block, board $[7,1]$ is of the form
$$[7,1]= \begin{array}{|r|r|r|}
\hline
K & \alpha K & \alpha^2 K \\
\hline
\beta K & \alpha^2 \beta K & \alpha \beta K \\
\hline
\beta ^2 K & \alpha \beta^2 K & \alpha^2 \beta^2 K \\
\hline
\end{array}\;.$$

Interestingly, all sixteen of the standard semi-magic Sudoku boards shown in Figure~\ref{orbits} are Keedwell.  For example, board $[7,8]$ from set $\mathcal{O}_1$ of Figure~\ref{orbits} and board $[7,6]$ from set $\mathcal{O}_3$ have the same upper-left block $K$ as $[7,1]$ and are of the form
$$[7,8]=\begin{array}{|r|r|r|}
\hline
K & \alpha K & \alpha ^2 K \\
\hline
\beta K & \alpha \beta K & \alpha^2 \beta K \\
\hline
\beta ^2 K & \alpha \beta ^2 K & \alpha^2 \beta^2 K \\
\hline
\end{array}
\quad \mbox{and} \quad
[7,6]= \begin{array}{|r|r|r|}
\hline
K & \alpha K & \alpha ^2 K \\
\hline
\beta K & \alpha^2 \beta K & \alpha \beta^2 K \\
\hline
\beta ^2 K & \alpha \beta^2 K & \alpha^2 \beta K \\
\hline
\end{array}\;.$$

We will show that each of the three collections of $H_{\Gamma}$-nests shown in Figure~\ref{orbits} can be distinguished by the following notion of linearity degree:

\begin{definition} \label{linear}
A matrix $\{m_{ij}\}$ is {\bf \em quasi-linear} if $m_{ij}=m_{i0}+m_{0j}$.  Suppose $B$ is a Keedwell Sudoku board with upper-left block $K$ and exponent matrices $\{c_{ij}\}$ and $\{d_{ij}\}$ for the cycles $\alpha$ and $\beta$.  Then the {\bf \em linearity degree} of $B$ is equal to the number of its exponent matrices that are quasi-linear.
\end{definition}

For example, board $[7,8]$ from orbit $\mathcal{O}_1$ has linearity degree 2, board $[7,1]$ from orbit $\mathcal{O}_2$ has linearity degree 1, and board $[7,6]$ from orbit $\mathcal{O}_3$ has linearity degree 0.  As we will soon see, the collections $\mathcal{O}_i$ shown in Figure~\ref{orbits} are in fact completely characterized by linearity degree, and this fact will enable us to prove that these three collections are in fact distinct orbits of $G_{sm}$.

Now let $G_k$ be the set of {\em Keedwell-preserving symmetries}; that is, the largest subgroup of the full Sudoku symmetry group whose elements preserve the set of Keedwell boards.  It is easy to see that all relabeling symmetries in $S_9$ are Keedwell-preserving, as well as compositions of transpose, pillar swaps, band swaps, 3-cycle permutations of rows within a band, and 3-cycle permutations of columns within a pillar.  The final group of Keedwell-preserving transformations is the set of {\em triple-transpositions} of rows (or columns) consisting of one row transposition in each of the three bands (or one column transposition in each of the three pillars).  Note that triple-transpositions reverse the ``orientation'' of all three bands (or pillars) in the sense that the order of the mini-rows (or mini-columns) of each band (or pillar) changes by a odd-degree permutation.  It is the fact that triple-transpositions reverse the orientation of {\em all} bands (or pillars) simultaneously that makes triple-transpositions Keedwell-preserving.  In fact, $G_k$ consists precisely of the symmetries that either preserve orientation in all the pillars/bands or reverse the orientation in all pillars/bands.  As a result of this, we have the following lemma.

\begin{lemma}\label{lemma1} Let $B_1$  be a Keedwell board and $g$ an element of the full Sudoku symmetry group.  If $g\cdot B_1 = B_2$ and $B_2$ is Keedwell, then $g \in G_k$.\end{lemma}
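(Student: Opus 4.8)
The plan is to reduce the statement to a parity (orientation) computation and then invoke the characterization of $G_k$ recorded above. First I would dispose of relabelings: since the full symmetry group is the commuting product of the relabeling group $S_9$ and the physical symmetries, and relabelings are Keedwell-preserving and orientation-neutral, I may write $g=rp$ and reduce to the case where $g=p$ is physical. Moreover, band permutations, pillar permutations, and the transpose already lie in $G_k$ (a group that preserves Keedwellness and merely reindexes or interchanges the exponent matrices $\{c_{ij}\}$ and $\{d_{ij}\}$), so I may strip these off as well and assume that $p$ is a product of a within-band row permutation $\sigma_i\in S_3$ for each band $i$ and a within-pillar column permutation $\tau_j\in S_3$ for each pillar $j$. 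The entire content then resides in the signs $\operatorname{sgn}(\sigma_i)$ and $\operatorname{sgn}(\tau_j)$, which are exactly the band and pillar orientations.

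The key observation is algebraic: $\alpha$ and $\beta$ each act on the nine cells of a subsquare as a product of three disjoint $3$-cycles, hence as \emph{even} permutations, and they commute, so $\langle\alpha,\beta\rangle\cong\Z_3\times\Z_3$ consists entirely of even cell-permutations. Consequently, in a Keedwell board every subsquare $\alpha^{c_{ij}}\beta^{d_{ij}}K$ differs from the reference block $K$ by an even permutation of its cells; and because $K$ contains nine distinct entries, the cell-permutation carrying $K$ to any prescribed arrangement is uniquely determined (equivalently, $\langle\alpha,\beta\rangle$ acts freely on such blocks). Thus the Keedwell condition forces every subsquare to be related to the reference block by an even cell-permutation.

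With this in hand I would compute directly. The $(i,j)$ subsquare of $p\cdot B_1$ is $\sigma_i\tau_j\,\alpha^{c_{ij}}\beta^{d_{ij}}K$, and the new reference block is $K'=\sigma_0\tau_0 K$. If $p\cdot B_1$ is Keedwell, then for all $i,j$ there are exponents with $\sigma_i\tau_j\,\alpha^{c_{ij}}\beta^{d_{ij}}=\alpha^{c'_{ij}}\beta^{d'_{ij}}\sigma_0\tau_0$ as cell-permutations, by the freeness just noted. Taking signs, and using that $\alpha,\beta$ are even while the cell-sign of a row/column permutation equals its sign in $S_3$, yields $\operatorname{sgn}(\sigma_i)\operatorname{sgn}(\tau_j)=\operatorname{sgn}(\sigma_0)\operatorname{sgn}(\tau_0)$ for every $i,j$. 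Specializing $j=0$ shows $\operatorname{sgn}(\sigma_i)$ is constant in $i$, and specializing $i=0$ shows $\operatorname{sgn}(\tau_j)$ is constant in $j$; that is, $p$ preserves orientation in all bands or reverses it in all bands, and likewise for all pillars. This is precisely the orientation description of $G_k$ given above, so $g\in G_k$. (The reverse inclusion, that such symmetries do preserve Keedwellness, is exactly the content of that characterization, so no separate verification is required here.)

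The step I expect to be the main obstacle is bookkeeping rather than ideas: arranging the decomposition of a general physical symmetry so that the within-band and within-pillar factors $\sigma_i,\tau_j$ are well defined, and confirming that band permutations, pillar permutations, and the transpose leave the sign pattern intact (with the transpose merely interchanging the band and pillar conditions). One must also check that the two independent ``constant-sign'' conditions derived here match the stated orientation description of $G_k$ exactly, so that the necessary condition obtained coincides with $G_k$ itself and not with some larger or smaller set of symmetries.
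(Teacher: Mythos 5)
Your proof is correct, and it is worth noting that it supplies an argument where the paper essentially gives none: the paper states Lemma~\ref{lemma1} as an immediate consequence of the (itself only sketched) assertion that $G_k$ consists exactly of the symmetries acting uniformly on band orientations and uniformly on pillar orientations, leaving unexplained why a symmetry that carries \emph{one} Keedwell board to a Keedwell board must already have this uniform orientation behavior. Your parity computation is precisely the missing step. The chain of reductions is sound (relabelings commute with physical symmetries and are Keedwell-preserving; transpose and band/pillar permutations lie in $G_k$, so stripping them off changes neither the hypothesis nor the conclusion, using that $G_k$ is a group), and the core identity $\sigma_i\tau_j\,\alpha^{c_{ij}}\beta^{d_{ij}}=\alpha^{c'_{ij}}\beta^{d'_{ij}}\sigma_0\tau_0$ is legitimately forced because a block with nine distinct entries determines the cell-permutation producing it. Since $\alpha$ and $\beta$ are products of three disjoint $3$-cycles (even), while a row or column transposition of a block is a product of three transpositions (odd), taking signs gives $\operatorname{sgn}(\sigma_i)\operatorname{sgn}(\tau_j)=\operatorname{sgn}(\sigma_0)\operatorname{sgn}(\tau_0)$ for all $i,j$, and specializing $i=0$ and $j=0$ yields the two independent constancy conditions. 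This matches the intended reading of the paper's description of $G_k$ (the bands uniform and the pillars uniform, independently --- as they must be, since a triple-transposition of rows reverses all band orientations while fixing all pillar orientations). The one dependency you correctly flag but do not prove is the inclusion of the uniform-orientation symmetries \emph{into} $G_k$, i.e.\ that such symmetries preserve Keedwellness; that is asserted in the paper just before the lemma, and your argument legitimately leans on it to convert ``$p$ has uniform orientation behavior'' into ``$p\in G_k$.''
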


Now we can relate linearity degree to Keedwell-preserving symmetries.

\begin{lemma}\label{lemma2} Linearity degree is invariant under $G_k$. \end{lemma}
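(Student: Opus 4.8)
The plan is to show that each generator of $G_k$ either fixes every exponent matrix or transforms the pair $(\{c_{ij}\},\{d_{ij}\})$ in a way that preserves the property of being quasi-linear, so that the total count of quasi-linear exponent matrices is unchanged. Since linearity degree is defined purely in terms of how many of the two exponent matrices satisfy $m_{ij}=m_{i0}+m_{0j}$, it suffices to track how the generators listed before Lemma \ref{lemma1} act on these matrices. I would organize the argument generator-by-generator, using the description of $G_k$ as the symmetries that preserve orientation in all bands/pillars simultaneously or reverse it in all bands/pillars simultaneously.

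First I would fix a Keedwell board $B$ with upper-left block $K$ and exponent matrices $\{c_{ij}\}$, $\{d_{ij}\}$, so that the $(i,j)$-th subsquare is $\alpha^{c_{ij}}\beta^{d_{ij}}K$. The key observation is that $\alpha$ and $\beta$ are order-3 cycles that commute (cycling down a mini-row and cycling right a mini-column are independent operations on a $3\times 3$ block), so exponents live in $\Z/3\Z$ and the Keedwell data is genuinely a pair of matrices over $\Z/3\Z$. I would then compute the effect of each generator: relabeling symmetries act only on $K$ and leave both exponent matrices untouched, hence fix linearity degree trivially. A pillar swap (say swapping pillars indexed $j$ and $j'$) permutes the columns of both $\{c_{ij}\}$ and $\{d_{ij}\}$ identically, and a band swap permutes the rows of both identically; transpose interchanges the two matrices while transposing each. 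For a quasi-linear matrix, permuting rows/columns or transposing yields a matrix that is again quasi-linear after suitably renormalizing the base row/column (the condition $c_{00}=d_{00}=1$ and the anchoring at index $0$ may shift, but quasi-linearity is preserved as an affine condition), so the count of quasi-linear matrices is invariant. The 3-cycle permutations of rows within a band and columns within a pillar add a fixed shift to the corresponding exponents along the affected indices, and such shifts preserve the relation $m_{ij}=m_{i0}+m_{0j}$.

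The subtle case—and the one I expect to be the main obstacle—is the triple-transposition. A single row transposition is not Keedwell-preserving, but a triple-transposition reverses orientation in all three bands at once; its effect on the exponent matrices is to negate (in $\Z/3\Z$) the $\alpha$-exponents in a structured, band-coherent way while permuting entries. The hard part will be verifying that this orientation-reversing action sends a quasi-linear matrix to a quasi-linear matrix rather than breaking the additive relation. Here I would lean on the fact, emphasized in the text before Lemma \ref{lemma1}, that triple-transpositions reverse orientation \emph{simultaneously} in all bands: this simultaneity is exactly what guarantees the transformation on exponents is an affine map (a global negation plus a uniform shift) rather than a row-dependent scramble, and affine maps over $\Z/3\Z$ preserve the quasi-linear relation $m_{ij}=m_{i0}+m_{0j}$. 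Once each generator is checked to preserve the number of quasi-linear exponent matrices, and since by Lemma \ref{lemma1} any symmetry carrying one Keedwell board to another lies in $G_k$ (so these generators suffice), I would conclude that linearity degree is constant on $G_k$-orbits, proving the lemma.
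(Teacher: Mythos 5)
Your proposal follows essentially the same route as the paper: check each generator of $G_k$, with the only nontrivial case being triple-transpositions, which the paper handles via the explicit commutation relations $\tau_k\alpha=\alpha\tau_k$ and $\tau_k\beta=\beta^2\tau_k$, showing the new $\beta$-exponent matrix is $2d_{ij}$ plus a column-dependent constant (zero in column $0$) attached to a new base block $\tau_1K$, hence still quasi-linear. The one imprecision is your description of that shift as ``uniform''---it is in fact column-dependent---but since adding a matrix of the form $s_j$ with $s_0=0$ still preserves the relation $m_{ij}=m_{i0}+m_{0j}$, your argument survives intact.
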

\begin{proof}
Clearly linearity degree is preserved by relabelings, transpose, pillar and band swaps, and 3-cycle permutations of rows within a band or columns within a pillar.   The only non-trivial case is to prove that triple-transpositions preserve linearity degree.  We will prove this case for pillars; the case for bands is similar.

Suppose we transform a Keedwell Sudoku board $B$ by a triple-transposition $g$ given by transpositions $\tau_1$ in the first pillar, $\tau_2$ in the second pillar, and $\tau_3$ in the third pillar.  If the original board $B$ is given by
$$B=\begin{array}{|r|r|r|}
\hline
K & \alpha^{c_{01}}\beta^{d_{01}} K & \alpha^{c_{02}}\beta^{d_{02}} K \\
\hline
\alpha^{c_{10}}\beta^{d_{10}} K & \alpha^{c_{11}}\beta^{d_{11}} K & \alpha^{c_{12}}\beta^{d_{12}} K \\
\hline
\alpha^{c_{20}}\beta^{d_{20}} K & \alpha^{c_{21}}\beta^{d_{21}} K & \alpha^{c_{22}}\beta^{d_{22}} K \\
\hline
\end{array} \;,$$

\noindent
then $gB$ is the Keedwell Sudoku board given by
$$gB=\begin{array}{|r|r|r|}
\hline
\tau_1 K & \tau_2 \alpha^{c_{01}}\beta^{d_{01}} K & \tau_3 \alpha^{c_{02}}\beta^{d_{02}} K \\
\hline
\tau_1 \alpha^{c_{10}}\beta^{d_{10}} K & \tau_2 \alpha^{c_{11}}\beta^{d_{11}} K & \tau_3 \alpha^{c_{12}}\beta^{d_{12}} K \\
\hline
\tau_1 \alpha^{c_{20}}\beta^{d_{20}} K & \tau_2 \alpha^{c_{21}}\beta^{d_{21}} K & \tau_3 \alpha^{c_{22}}\beta^{d_{22}} K \\
\hline
\end{array} \;.$$

\noindent
For each $k$ we have $\tau_k \alpha = \alpha \tau_k$ and $\tau_k \beta = \beta^2 \tau_k$, so for all $i,j,k$ we have
$$\tau_k \alpha^{c_{ij}} \beta^{c_{ij}} K = \alpha^{c_{ij}} \tau_k \tau_1 \beta^{2d_{ij}} (\tau_1 K).$$

\noindent
Since $\tau_k \tau_1$ is a 3-cycle for each $k$, we have $\tau_2 \tau_1 \beta^{2d_{i1}} = \beta^{2d_{i1}+r}$ and $\tau_3 \tau_1 \beta^{2d_{i2}} = \beta^{2d_{i2}+s}$.  Therefore $gB$ can be written
$$gB=\begin{array}{|r|r|r|}
\hline
\tau_1 K & \alpha^{c_{01}}\beta^{2d_{01}+r} (\tau_1 K) & \alpha^{c_{02}}\beta^{2d_{02}+s} (\tau_1 K) \\
\hline
\alpha^{c_{10}}\beta^{2d_{10}} (\tau_1 K) & \alpha^{c_{11}}\beta^{2d_{11}+r} (\tau_1 K) & \alpha^{c_{12}}\beta^{2d_{12}+s} (\tau_1 K) \\
\hline
\alpha^{c_{20}}\beta^{2d_{20}} (\tau_1 K) & \alpha^{c_{21}}\beta^{2d_{21}+r} (\tau_1 K) & \alpha^{c_{22}}\beta^{2d_{22}+s} (\tau_1 K) \\
\hline
\end{array} \;,$$

\noindent
which clearly has the same linearity degree as $B$.
\end{proof}

With the two previous lemmas we are now able to show that the three connected components $\mathcal{O}_1$, $\mathcal{O}_2$, and $\mathcal{O}_3$ of $H_{\Gamma}$-nests from Figure~\ref{orbits} are in fact precisely the orbits of the semi-magic Sudoku boards under the action of $G_{sm}$.

\begin{theorem} There are exactly three $G_{sm}$-orbits on the set of semi-magic Sudoku boards.
\end{theorem}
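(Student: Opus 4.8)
The plan is to prove the two inequalities separately: that the set of semi-magic Sudoku boards has at most three $G_{sm}$-orbits, and that it has at least three. For the upper bound I would first recall that every semi-magic Sudoku board is $H_{\Gamma}$-equivalent to exactly one of the sixteen standard boards $[a,b]$, so each board lies in a unique $H_{\Gamma}$-nest. Since $H_{\Gamma}\subseteq H_9\subseteq G_{sm}$, every $G_{sm}$-orbit is a union of $H_{\Gamma}$-nests, and it therefore suffices to see how the sixteen nests are grouped together. The symmetries $u$ (band swap), $v$ (pillar swap), and $\mu=(12)(45)(78)$ all lie in $G_{sm}$, so the orbits of $\langle u,v,\mu\rangle$ refine the $G_{sm}$-orbits. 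Figure~\ref{orbits} displays exactly three connected components $\mathcal{O}_1,\mathcal{O}_2,\mathcal{O}_3$ under this action, so the sixteen nests fall into three $\langle u,v,\mu\rangle$-classes, each of which is contained in a single $G_{sm}$-orbit. This already yields at most three $G_{sm}$-orbits.

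For the lower bound I would separate the three components using linearity degree. First I would confirm that all sixteen standard boards are Keedwell, so that linearity degree (Definition~\ref{linear}) is defined on each of them; reading off the exponent matrices then shows the three representatives $[7,8]\in\mathcal{O}_1$, $[7,1]\in\mathcal{O}_2$, and $[7,6]\in\mathcal{O}_3$ have linearity degrees $2$, $1$, and $0$ respectively, and a short finite check shows the degree is in fact constant on each component, so that the collections $\mathcal{O}_i$ are completely characterized by this invariant. The crux is then to argue that Keedwell boards of different linearity degree cannot be $G_{sm}$-equivalent. Suppose for contradiction that two of the components lay in a common $G_{sm}$-orbit. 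Then some $g\in G_{sm}$ would carry a standard board of one component to a standard board of the other; for instance it would carry one of $[7,8],[7,1],[7,6]$ to another. Both boards are Keedwell, so Lemma~\ref{lemma1} forces $g\in G_k$, and Lemma~\ref{lemma2} then guarantees that $g$ preserves linearity degree. This contradicts the fact that the two chosen boards have distinct degrees, so $\mathcal{O}_1,\mathcal{O}_2,\mathcal{O}_3$ are pairwise inequivalent under $G_{sm}$ and there are at least three orbits.

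Combining the two bounds gives exactly three $G_{sm}$-orbits, namely $\mathcal{O}_1$, $\mathcal{O}_2$, and $\mathcal{O}_3$, completing the proof.

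I expect the genuine difficulty to be entirely absorbed by Lemmas~\ref{lemma1} and~\ref{lemma2}: the substantive structural facts are that any symmetry carrying one Keedwell board to another must lie in the orientation-coherent group $G_k$, and that linearity degree is a true $G_k$-invariant. Once these are in hand, the remaining tasks—verifying that each standard board is Keedwell and recording its linearity degree—are a routine finite verification rather than a conceptual obstacle. A brute-force enumeration would also settle the count of orbits, but the linearity-degree argument is preferable because it furnishes an explicit invariant that \emph{explains} why the three components are honestly distinct orbits rather than merely confirming it numerically.
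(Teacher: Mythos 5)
Your proposal is correct and follows essentially the same route as the paper: the three connected components of Figure~\ref{orbits} give the upper bound, and the lower bound comes from combining Lemma~\ref{lemma1} (any symmetry carrying a Keedwell board to a Keedwell board lies in $G_k$) with Lemma~\ref{lemma2} (linearity degree is a $G_k$-invariant) applied to representatives of distinct linearity degree. The only cosmetic difference is that you spell out the upper-bound bookkeeping (orbits as unions of $H_{\Gamma}$-nests) a little more explicitly than the paper does.
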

\begin{proof}  We have already produced three sets of semi-magic Sudoku boards, $\mathcal{O}_1, \mathcal{O}_2$ and $\mathcal{O}_3$ shown in Figure \ref{orbits}, that are connected by elements of $G_{sm}$.  Suppose, for example, that $\mathcal{O}_1$ and $\mathcal{O}_2$ were not distinct $G_{sm}$-orbits.  Then there exists a $g \in G_{sm}$ such that $g \cdot [7,8] = [5,8]$.  By Lemma \ref{lemma1}, $g \in G_k$.  But Lemma \ref{lemma2} states that [7,8] and [5,8] have the same linearity degree.  This is a contradiction to the fact that [7,8] has linearity degree 2 and [5,8] has linearity degree 1.  Therefore, there are exactly three $G_{sm}$-orbits of semi-magic Sudoku boards
\end{proof}

%%%%%%%%%%%%%%%%%%%%%%%%%%%%%%%%%%%%%%%%%%%%%%%%%%%%%%%%%%%%%%%%%%%%%%%
\subsection{A minimal complete semi-magic Sudoku symmetry group}
\label{S:sm_minimal}

Since each standard semi-magic Sudoku board represents $72^3$ distinct semi-magic Sudoku boards, the three orbits described in Section \ref{S:sm_nests} have order $72^3$, $6 \cdot 72^3$ and $9 \cdot 72^3$.  The order of each orbit must divide the order of any semi-magic Sudoku symmetry group.  Therefore, a minimal semi-magic Sudoku symmetry group must be a multiple of lcm$(72^3, 6 \cdot 72^3, 9 \cdot 72^3) = 18 \cdot 72^3$.  The group used in producing the three orbits in Figure \ref{orbits} consists of all of the Sudoku physical symmetries and the relabeling symmetry, (12)(45)(78).  This group, $G = H_9 \times \langle (12)(45)(78) \rangle$, has order $18 \cdot 72^3$, so is, in fact, a minimal complete semi-magic Sudoku symmetry group.

%%%%%%%%%%%%%%%%%%%%%%%%%%%%%%%%%%%%%%%%%%%%%%%%%%%%%%%%%%%%%%%%%%%%%%%
%%%%%%%%%%%%%%%%%%%%%%%%%%%%%%%%%%%%%%%%%%%%%%%%%%%%%%%%%%%%%%%%%%%%%%%
\section{A minimal complete Sudoku symmetry group}

A natural question to ask is whether the techniques we used in this paper to investigate modular-magic Sudoku and semi-magic Sudoku can be applied to standard $9\times 9$ Sudoku to reduce the size of the Sudoku symmetry group.  The full physical Sudoku symmetry group $H_9$ contains all possible band, pillar, row and column swaps as well as all of the symmetries of the square.  This group has order 3,359,232 and as all $9!$ elements of $S_9$ are valid relabelings, the full Sudoku symmetry group $G_9=H_9\times S_9$ has order 1,218,998,108,160.  In fact, this group is {\em already} minimal because there exist Sudoku boards that are not fixed by any non-identity element of $G_9$ so the size of the largest orbit is $|G_9|$.

To see this, consider that there are 6,670,903,752,021,072,936,960 possible Sudoku boards and $N=$5,472,730,538 orbits  under the action of $G_9$ \cite{Felgenhauer}. Therefore the average size of an orbit is 1,218,935,174,261. Suppose, for a contradiction, that every Sudoku board is fixed by at least one non-identity element of $G_9$. If the  $N$ orbits have corresponding stabilizer groups $K _1,\dots ,K _N$ then
$$
\text{Average orbit size} =\frac{\frac{|G_9|}{|K_1|} +\cdots +\frac{|G_9|}{|K_N|}}{N}
\leq \frac{\frac{|G_9|}{2} +\cdots +\frac{|G_9|}{2}}{N} = \frac{1}{2} |G_9|,
   $$
which is clearly far less than the actual average orbit size stated above. Therefore at least one Sudoku board is not fixed by any non-identity element of $G_9$.

Since the full Sudoku symmetry group is already minimal, the techniques in this paper cannot be used to reduce it.  However, these techniques should be helpful in analyzing other types of puzzles, including Sudoku variants.  As seen in \cite{Arnold2}, reduction of the symmetry group can be of great practical use towards the goal of analyzing Sudoku-style puzzles  from a theoretical perspective.

%%%%%%%%%%%%%%%%%%%%%%%%%%%%%%%%%%%%%%%%%%%%%%%%%%%%%%%%%%%%%%%%%%%%%%%
%%%%%%%%%%%%%%%%%%%%%%%%%%%%%%%%%%%%%%%%%%%%%%%%%%%%%%%%%%%%%%%%%%%%%%%
% REFERENCES

\end{document}